\newtheorem{thm}{Theorem}[section]
\newtheorem{prop}[thm]{Proposition}
\newtheorem{coro}[thm]{Corollary}
\theoremstyle{remark}
\newtheorem{rema}[thm]{Remark}
\newtheorem{exa}[thm]{Example}
\newtheorem{defi}[thm]{Definition}
\DeclareMathOperator\ad{ad}
\newcommand{\rank} 
{\operatornamewithlimits{rank}}
\title{Leaves of stacky Lie algebroids}
\author{Daniel \'Alvarez}
\address{Instituto de Matem\'atica Pura e Aplicada, Estrada Dona Castorina, 110, Jardim Bot\^anico, CEP \texttt{22460320},
Rio de Janeiro, Brasil}
\email{uerbum@impa.br}
\date{} % Delete this line to display the current date
\begin{document}
\begin{abstract} We show that the leaves of an LA-groupoid which pass through the unit manifold are, modulo a connectedness issue, Lie groupoids. We illustrate this phenomenon by considering the cotangent Lie algebroids of Poisson groupoids thus obtaining an interesting class of symplectic groupoids coming from their symplectic foliations. In particular, we show that for a (strict) Lie 2-group the coadjoint orbits of the units in the dual of its Lie 2-algebra are symplectic groupoids, meaning that the classical Kostant-Kirillov-Souriau symplectic forms on these special coadjoint orbits are multiplicative.  
\end{abstract}
%%% BEGIN DOCUMENT
\maketitle
\tableofcontents % the asterisk means that the contents itself isn't put into the ToC
 
\section{Introduction}\label{sec:intro} Lie algebroids are a unifying concept in differential geometry: they allow us to describe foliations, Lie algebra actions, connections on principal bundles and Poisson brackets among many other examples. Lie algebroids can also be seen as the infinitesimal objects associated to Lie groupoids, generalizing the correspondence between Lie groups and Lie algebras. Lie groupoids, on the other hand, provide geometric models for singular spaces when regarded as atlases for differentiable stacks, see \cite{difger}. Lie algebroids over differentiable stacks can be described by {\em LA-groupoids} \cite{macdou} which are essentially groupoid objects in the category of Lie algebroids and so they can be called ``stacky Lie algebroids'', see \cite{walphd} for a precise statement. To every Lie algebroid is associated a singular foliation on its base manifold. We shall see that, at least up to a connectedness issue, the leaves of the singular foliation associated to an LA-groupoid which pass through the unit manifold inherit themselves a Lie groupoid structure, see Theorem \ref{orblagr}.

In order to illustrate Theorem \ref{orblagr} we shall focus on Poisson structures. A Poisson structure on a manifold determines a singular symplectic foliation which completely characterizes it \cite{weipoi}. From the viewpoint of Lie theory, Poisson structures arise as the infinitesimal counterparts of Lie groupoids endowed with a compatible symplectic structure, which are called {\em symplectic groupoids} \cite{weisygr,karpoi}. Whenever a Poisson structure is induced by a symplectic groupoid, it is called {\em integrable}; but unlike Lie algebras, not every Poisson manifold is integrable. Symplectic groupoids are important tools in the study of Poisson manifolds but in general they are difficult to construct, see e.g. \cite{craruipoi}.% In we show that Lie theoretic objects called {\em Poisson groupoids} \cite{weicoi} always provide whole families of symplectic groupoids associated to them. 

A {\em Poisson groupoid} \cite{weicoi} is a Lie groupoid endowed with a Poisson structure which is compatible with the groupoid multiplication: these objects generalize on the one hand symplectic groupoids and, on the other, they generalize Poisson-Lie groups \cite{driham}. We shall see in Proposition \ref{cotpoigro} that those leaves of the symplectic foliation associated to a Poisson groupoid structure which pass through the unit manifold provide examples of symplectic groupoids. 

We exemplify this observation by means of (strict) Lie 2-groups \cite{grp}, which are group objects in the category of Lie groupoids. Lie 2-groups equipped with a Poisson structure compatible with both the group and groupoid multiplications are called {\em Poisson 2-groups} \cite{poi2gro}. The symplectic leaves of Poisson groups are the orbits of the {\em infinitesimal dressing actions} \cite{semdres}; for Poisson 2-groups these dressing actions are multiplicative actions, meaning that the action maps are Lie groupoid morphisms. The simplest example of a Poisson 2-group is a Lie 2-group endowed with the zero bracket. In this case, the dressing action coincides with the coadjoint action on the dual of its Lie algebra so we get, in particular, that the coadjoint orbits of Lie 2-groups which contain a unit, endowed with their canonical Kostant-Kirillov-Souriau symplectic forms, are symplectic groupoids.  
 
\section{Preliminaries} \subsection{Lie groupoids and Lie algebroids} A {\em smooth groupoid} is a groupoid object in the category of not necessarily Hausdorff smooth manifolds such that its source map is a submersion. A {\em Lie groupoid} is a smooth groupoid such that its base and source-fibers are Hausdorff manifolds; see e.g. \cite{moeint} for an introduction to these concepts. We denote the source, target, multiplication, unit and inversion maps of a Lie groupoid $M_1$ over $M_0$, $M_1 \rightrightarrows M_0$, as $\mathtt{s}$, $\mathtt{t}$ and $\mathtt{m}$, $\mathtt{u}$, $\mathtt{i}$ respectively. In order to avoid ambiguity when dealing with several groupoids we use a subindex $\mathtt{s}=\mathtt{s}_{M_1}$, $\mathtt{t}_{M_1}$, $\mathtt{m}_{M_1}$ to specify the groupoid under consideration. A {\em Lie algebroid} consists of a vector bundle $A $ over a manifold $M_0$ endowed with a vector bundle map $\mathtt{a}:A \rightarrow TM_0$, called the {\em anchor}, and a Lie algebra bracket on $\Gamma (A)$ that satisfies the Leibniz rule: $[X,fY]=f[X,Y]+(\mathcal{L}_{\mathtt{a}(X)}f)Y$ for all $X,Y\in \Gamma (A)$ and all $f\in C^\infty(M)$. Given a Lie groupoid $M_1 \rightrightarrows M_0$, there is a canonical way to endow $\ker T \mathtt{s}|_{M_0}$ with a Lie algebroid structure over $M_0$, see \cite{moeint}; we call it the {\em Lie algebroid of $M_1 \rightrightarrows M_0$} and we denote it by $A_{M_1}$.  \subsection{LA-groupoids and double Lie groupoids} 
 \begin{defi}\cite{macdou} An {\em LA-groupoid} is a Lie groupoid in the category of Lie algebroids, that is, a Lie groupoid $A_1 \rightrightarrows A_0$ where $A_1$ and $A_0$ are Lie algebroids, the structure maps are Lie algebroid morphisms over the structure maps of a base groupoid $M_1 \rightrightarrows M_0$, and the map $A_1 \rightarrow \mathtt{s}_{M_1}^*A_0 $ induced by $\mathtt{s}_{A_1}$ is surjective. \end{defi}

	\begin{exa} Let $M_1\rightrightarrows M_0$ be a Lie groupoid. In general, $TM_1\rightrightarrows TM_0 $ (with the structure maps given by applying the tangent functor) and the usual Lie bracket is an LA-groupoid. \end{exa}
Let $M_1\rightrightarrows M_0$ be a Lie groupoid. Then $T^*M_1\rightrightarrows A_{M_1}^*$ (where $A_{M_1}\rightarrow M_0$ is the Lie algebroid of $M_1\rightrightarrows M_0$) is also a Lie groupoid, called the {\em cotangent groupoid} of $M_1\rightrightarrows M_0$. The multiplication $\hat{\mathtt{m}}$ in $T^*{M_1}$ is characterized by the following property:
\begin{align} \langle \hat{\mathtt{m}}(\xi,\eta),T{\mathtt{m}}(u,v)\rangle =\langle \xi,u\rangle+\langle \eta,v\rangle, \label{eq:cotgro}\end{align} 
for all composable $u\in T_xM_1$, $v\in T_yM_1$ and $\xi\in T^*_xM_1$, $\eta\in T^*_yM_1$ \cite{cosgro}.

LA-groupoids are the infinitesimal counterparts to the following higher categorical structures.
\begin{defi}\cite{browmac,macdou} A double topological groupoid is a groupoid object in the category of topological groupoids. A double topological groupoid is represented as a diagram of the form
\[ \xymatrix{ {G}_1\ar@<-.5ex>[r] \ar@<.5ex>[r]\ar@<-.5ex>[d] \ar@<.5ex>[d]& {M}_1 \ar@<-.5ex>[d] \ar@<.5ex>[d] \\ G_0 \ar@<-.5ex>[r] \ar@<.5ex>[r] & M_0. }\]  
A {\em double Lie groupoid} is a double topological groupoid as in the previous diagram such that: (1) each of the side groupoids is a smooth groupoid, (2) $M_1$ and $G_0$ are Lie groupoids over $M_0$, and (3) the double source map $(\mathtt{s}^{M_1},\mathtt{s}^{G_0}):  G_1 \rightarrow M_1 \times_{M_0} G_0 $ is a submersion (the superindices $\quad^{M_1},\quad^{G_0}$ denote the groupoid structures $G_1 \rightrightarrows M_1$, $G_1 \rightrightarrows G_0$ respectively). \end{defi}
It is known that double Lie groupoids differentiate to LA-groupoids \cite{macdou}, but the integration problem is not fully understood, see \cite{morla}. The next simple example shows that the na\"ive attempt to integrate an LA-groupoid by applying Lie's second theorem to all its structure maps fails in general. Other integrations of the Lie algebroids involved can still carry a double Lie groupoid structure, the best candidate being an integration whose vertical source-fibers are 1-connected in the stacky sense, see \cite{funsta}.  
\begin{exa}\label{nonintlagr} Consider a free action of $\mathbb{R}^2$ on a manifold $M$ with at least three different orbits $O_i$. Consider $x_i\in O_i$ for $i=1,2,3$ and $v\in \mathbb{R}^2$ with $|v|>1$ and take 
\[ N=M-(\{v\cdot x_1\}\cup\{z\cdot x_3|z\in \mathbb{S}^1\}). \] 
On $N$ there is an infinitesimal action of $\mathbb{R}^2$ which induces a diagonal action on $N^2$; the image of this action is a distribution $D$ which is a subgroupoid of the tangent groupoid $T N^2 \rightrightarrows TN$ with respect to the pair groupoid structure on $N^2 \rightrightarrows N$ and hence it is an LA-groupoid. 

The leaf of $D$ through $(x_1,x_2)$ is diffeomorphic to $\mathbb{R}^2$ with a point removed and the leaves through $(x_1,x_3)$ and $(x_2,x_3)$ are diffeomorphic to a disk. Call $\mathcal{O}_{ij}$ the leaf through $(x_i,x_j)$. We have that $\pi_1\left(\mathcal{O}_{12},(x_1,x_2)\right)\cong \mathbb{Z} $. Take a non trivial homotopy class $a$ in that group. Then $\mathtt{s}(a)\in \pi_1(O_2)=1$ is the trivial class so $a$ and the constant path based on $(x_2,x_3)$ should be composable. However, there is no element in $\pi_1(\mathcal{O}_{13},(x_1,x_3))=1$ which projects to $\mathtt{t} (a)\in \pi_1(O_1-\{v\cdot x_1\})$ which is non trivial. So the monodromy groupoid of $D$ carries no compatible groupoid structure over the monodromy groupoid of $D|_N$. \end{exa}

\section{The main result} \begin{defi} Let $A_1 \rightrightarrows A_0$ be an LA-groupoid over $M_1 \rightrightarrows M_0$. Let $\mathcal{O}\subset M_0 $ be an $A_0$-orbit and take $\mathcal{O}'$ the $A_1$-orbit which contains $\mathcal{O} $. Let us denote by $\widehat{\mathcal{O} }$ the union of the connected components of $\mathcal{O}' \cap \mathtt{s}_{M_1}^{-1}(x)$ which contain a unit for all $x\in \mathcal{O}$. \end{defi}  
\begin{thm}\label{orblagr} Let $A_1 \rightrightarrows A_0$ be an LA-groupoid over $M_1 \rightrightarrows M_0$ and let $\mathcal{O}\subset G_0 $ be an $A_0$-orbit. Then $\widehat{\mathcal{O} }$ is an immersed Lie subgroupoid of $M_1$ over $\mathcal{O}$. \end{thm}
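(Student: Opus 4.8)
The plan is to use the three features of an LA-groupoid that bear on orbits: $\mathtt{s}_{A_1}$ and $\mathtt{t}_{A_1}$ are \emph{fibrations} of Lie algebroids over $\mathtt{s}_{M_1}$ and $\mathtt{t}_{M_1}$, while $\mathtt{u}_{A_1},\mathtt{i}_{A_1},\mathtt{m}_{A_1}$ are Lie algebroid morphisms. Since Lie algebroid morphisms send orbits into orbits, fibrations restrict to surjective submersions between them, and $\mathcal{O}=\mathtt{u}(\mathcal{O})\subseteq\mathcal{O}'$, one gets at once that $\mathtt{s}_{M_1}(\mathcal{O}')=\mathtt{t}_{M_1}(\mathcal{O}')=\mathcal{O}$, that $\mathtt{i}_{M_1}(\mathcal{O}')=\mathcal{O}'$, and that $\mathtt{s}_{M_1}|_{\mathcal{O}'},\mathtt{t}_{M_1}|_{\mathcal{O}'}\colon\mathcal{O}'\to\mathcal{O}$ are surjective submersions. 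Hence each $\mathcal{O}'_x:=\mathcal{O}'\cap\mathtt{s}_{M_1}^{-1}(x)$ is an embedded submanifold of $\mathcal{O}'$ containing $1_x$, and $\widehat{\mathcal{O}}$ is open in $\mathcal{O}'$: in a local trivialisation $\mathcal{O}'\cong U\times F$ of $\mathtt{s}_{M_1}|_{\mathcal{O}'}$ over a connected $U\subseteq\mathcal{O}$, the unit section $x\mapsto 1_x$ is a section $U\to F$ whose image lies in a single connected component $F_0$ of $F$, so $\widehat{\mathcal{O}}\cap(U\times F)=U\times F_0$. Thus $\widehat{\mathcal{O}}$ is an immersed submanifold of $M_1$ of dimension $\dim\mathcal{O}'$, it contains the units $\mathtt{u}(\mathcal{O})$, and $\mathtt{s}_{M_1},\mathtt{t}_{M_1}$ restrict to surjective submersions $\widehat{\mathcal{O}}\to\mathcal{O}$.

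The technical heart is the pointwise statement $(\star)$: for every $g\in\mathcal{O}'$ the map $\mathtt{s}_{A_1}$ sends the isotropy Lie algebra $\mathfrak{g}_g:=\ker\bigl(\mathtt{a}\colon (A_1)_g\to T_gM_1\bigr)$ onto the isotropy $\mathfrak{h}_{\mathtt{s}(g)}:=\ker\bigl(\mathtt{a}\colon (A_0)_{\mathtt{s}(g)}\to T_{\mathtt{s}(g)}M_0\bigr)$, and symmetrically for $\mathtt{t}_{A_1}$; equivalently, the Lie algebroid $\ker\mathtt{s}_{A_1}$ (whose anchor lands in the $\mathtt{s}_{M_1}$-vertical bundle) is, along $\mathcal{O}'$, transitive over the $\mathtt{s}_{M_1}$-fibres of $\mathcal{O}'$. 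At the units $(\star)$ is immediate, since $\mathtt{a}\circ\mathtt{u}_{A_1}=T\mathtt{u}_{M_1}\circ\mathtt{a}$ gives $\mathtt{u}_{A_1}(\mathfrak{h}_x)\subseteq\mathfrak{g}_{1_x}$ while $\mathtt{s}_{A_1}\circ\mathtt{u}_{A_1}=\mathrm{id}$. For general $g\in\mathcal{O}'$ one has $\mathtt{t}(g)\in\mathcal{O}$, and the right translation $R_{0_g}$ in the Lie groupoid $A_1\rightrightarrows A_0$ by the zero vector $0_g\in(A_1)_g$ restricts to an \emph{isomorphism of Lie algebroids} $\ker\mathtt{s}_{A_1}|_{\mathtt{s}_{M_1}^{-1}(\mathtt{t}(g))}\xrightarrow{\ \sim\ }\ker\mathtt{s}_{A_1}|_{\mathtt{s}_{M_1}^{-1}(\mathtt{s}(g))}$ covering $R_g\colon\mathtt{s}_{M_1}^{-1}(\mathtt{t}(g))\to\mathtt{s}_{M_1}^{-1}(\mathtt{s}(g))$ with $R_g(1_{\mathtt{t}(g)})=g$ (it equals $\mathtt{m}_{A_1}(-,0_g)$, a composite of Lie algebroid morphisms); transporting $(\star)$ at $1_{\mathtt{t}(g)}$ through $R_{0_g}$ yields $(\star)$ at $g$. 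In particular every orbit of $\ker\mathtt{s}_{A_1}$ meeting $\mathcal{O}'$ is open in the corresponding $\mathcal{O}'_x$, so $\widehat{\mathcal{O}}_x:=\widehat{\mathcal{O}}\cap\mathtt{s}_{M_1}^{-1}(x)$ is exactly the $\ker\mathtt{s}_{A_1}$-orbit through $1_x$.

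Granting $(\star)$, closure of $\widehat{\mathcal{O}}$ under the groupoid operations follows. Take composable $g,h\in\widehat{\mathcal{O}}$ with $\mathtt{s}(h)=x$, $\mathtt{t}(h)=\mathtt{s}(g)=y$, and pick paths $\gamma$ from $1_y$ to $g$ in $\mathcal{O}'_y$ and $\delta$ from $1_x$ to $h$ in $\mathcal{O}'_x$. The curve $c(t)=\gamma(t)\cdot h$ is defined (as $\mathtt{s}(\gamma(t))=y=\mathtt{t}(h)$), starts at $h\in\mathcal{O}'$, and has constant source $x$. Writing $\dot\gamma(t)=\mathtt{a}(v_t)$ with $v_t\in(A_1)_{\gamma(t)}$, one has $\mathtt{a}(\mathtt{s}_{A_1}v_t)=T\mathtt{s}_{M_1}\dot\gamma(t)=0$, so $\mathtt{s}_{A_1}v_t\in\mathfrak{h}_y$; by $(\star)$ for $\mathtt{t}_{A_1}$ at $h$ there is $w_t\in\mathfrak{g}_h$ with $\mathtt{t}_{A_1}w_t=\mathtt{s}_{A_1}v_t$, so $(v_t,w_t)$ is composable in $A_1$ with anchor $(\dot\gamma(t),0)$; applying the morphism $\mathtt{m}_{A_1}$ identifies $\dot c(t)$ with $\mathtt{a}\bigl(\mathtt{m}_{A_1}(v_t,w_t)\bigr)$, which is tangent to the leaf through $c(t)$. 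Hence $c$ stays in $\mathcal{O}'$ and, lying in $\mathtt{s}_{M_1}^{-1}(x)$, joins $h$ to $gh$ there; concatenating with $\delta$ gives $gh\in\widehat{\mathcal{O}}$. The same computation with $h$ replaced by an arbitrary $k\in\mathcal{O}'$ gives $\mathtt{m}_{M_1}\bigl(\widehat{\mathcal{O}}\times_{\mathcal{O}}\mathcal{O}'\bigr)\subseteq\mathcal{O}'$; combined with $\mathtt{i}_{M_1}(\mathcal{O}')=\mathcal{O}'$ and the path $t\mapsto g\cdot\gamma(t)^{-1}$, which joins $g$ to $1_{\mathtt{t}(g)}$ inside $\mathcal{O}'\cap\mathtt{t}_{M_1}^{-1}(\mathtt{t}(g))$, this shows that the ``target-fibre'' analogue of $\widehat{\mathcal{O}}$ equals $\widehat{\mathcal{O}}$; since $\mathtt{i}_{M_1}$ is an isomorphism exchanging the two, $\mathtt{i}_{M_1}(\widehat{\mathcal{O}})=\widehat{\mathcal{O}}$.

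Finally one assembles: $\widehat{\mathcal{O}}$ is a subgroupoid of $M_1$ over $\mathcal{O}$; it is open in the leaf $\mathcal{O}'$, hence an immersed submanifold of $M_1$, and since orbits of Lie algebroids are initial submanifolds the restricted structure maps are smooth; the source map is a submersion and $\mathcal{O}$ and the source-fibres $\widehat{\mathcal{O}}_x$ are Hausdorff, so $\widehat{\mathcal{O}}\rightrightarrows\mathcal{O}$ is an immersed Lie subgroupoid of $M_1$. The step I expect to be the real obstacle is $(\star)$ — controlling how $\mathcal{O}'$ sits inside the source-fibres of $M_1$ — together with the verification that $R_{0_g}$ genuinely is a Lie algebroid isomorphism; note that the cheap argument ``$\mathtt{m}_{A_1}$ preserves orbits, so $\mathtt{m}_{M_1}(\mathcal{O}'\times_{\mathcal{O}}\mathcal{O}')$ lies in one orbit'' is \emph{not} available, since the Lie algebroid of composable pairs can have orbits strictly smaller than the connected components of $\mathcal{O}'\times_{\mathcal{O}}\mathcal{O}'$ — so the full groupoid structure of $A_1\rightrightarrows A_0$, not merely that $\mathtt{m}_{A_1}$ is a morphism, is what is needed.
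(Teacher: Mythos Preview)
Your argument is correct and takes a genuinely different route from the paper's. The paper proceeds by constructing a Lie subalgebroid $B\hookrightarrow A_{M_1}$ over $\mathcal{O}$ (obtained from the core $C=\ker\mathtt{s}_{A_1}|_{M_0}$ via the action Lie algebroid $i^*C$ and the morphism $\phi=\mathtt{a}_1|_C:C\to A_{M_1}$), then invokes Lie's Second Theorem to integrate $B\hookrightarrow A_{M_1}$ to an immersion $\Psi:H\to M_1$ from the source-simply-connected integration $H$ of $B$, and finally identifies $\widehat{\mathcal{O}}=\Psi(H)$; the groupoid structure comes from $\Psi$ being a morphism injective on objects. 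You instead work directly on the leaf $\mathcal{O}'$: you show $\widehat{\mathcal{O}}$ is open in $\mathcal{O}'$, establish the pointwise transitivity statement $(\star)$ along $\mathcal{O}'$ by right-translating from the units with $R_{0_g}$, and then verify closure under multiplication and inversion by explicit path arguments in $\mathcal{O}'$. Your approach is more elementary in that it avoids Lie II entirely and gives a cleaner geometric picture of why $\widehat{\mathcal{O}}$ is open in $\mathcal{O}'$; the paper's approach, on the other hand, exhibits $\widehat{\mathcal{O}}$ explicitly as the image of a source-simply-connected groupoid integrating a concrete Lie algebroid, which is extra structural information. One small comment: your worry about $R_{0_g}$ being a Lie algebroid isomorphism is stronger than what you actually use; for $(\star)$ you only need that $R_{0_g}$ intertwines the anchors, i.e.\ $\mathtt{a}_1\bigl(\mathtt{m}_{A_1}(v,0_g)\bigr)=T\mathtt{m}_{M_1}(\mathtt{a}_1 v,0)=TR_g(\mathtt{a}_1 v)$, which is immediate from $\mathtt{m}_{A_1}$ being anchor-preserving, together with the constancy of $\dim\mathfrak{g}_g$ along $\mathcal{O}'$ and of $\dim\mathfrak{h}_x$ along $\mathcal{O}$.
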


What motivates our main result is the following algebraic observation. If an LA-groupoid is integrable by a double Lie groupoid, then we can use this integration to show that the orbits through the units inherit a smooth groupoid structure.
\begin{prop}\label{orbdougro} Let $G_1 $ be a double Lie groupoid with sides $M_1$ and $G_0 $ over $M_0$. Then the $G_1 $-orbits of the units in $M_1 $ are immersed smooth subgroupoids over the corresponding $G_0$-orbits in $M_0$. \end{prop}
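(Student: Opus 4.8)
The plan is to realize the $G_1$-orbit $\mathcal{O}'$ of the chosen unit as an immersed submanifold of $M_1$ in the usual way, and then to verify, one structure map at a time, that the groupoid structure of $M_1\rightrightarrows M_0$ restricts to it. Two facts get used repeatedly. First, unwinding the definition of a double Lie groupoid as a groupoid object in groupoids, the structure maps $\mathtt{s}^{G_0},\mathtt{t}^{G_0},\mathtt{u}^{G_0},\mathtt{m}^{G_0},\mathtt{i}^{G_0}$ of $G_1\rightrightarrows G_0$, paired with the corresponding structure maps $\mathtt{s}_{M_1},\mathtt{t}_{M_1},\mathtt{u}_{M_1},\mathtt{m}_{M_1},\mathtt{i}_{M_1}$ of $M_1\rightrightarrows M_0$, are morphisms of the groupoids $G_1\rightrightarrows M_1$ and $G_0\rightrightarrows M_0$ (and of the relevant fibered products, in the case of multiplication). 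Second, any morphism of smooth groupoids carries each orbit of its domain into an orbit of its codomain, compatibly with the map it induces on objects. Fix now a unit $x$ of $M_1\rightrightarrows M_0$, viewed as a point of $M_1$; let $\mathcal{O}\subseteq M_0$ be its orbit under $G_0\rightrightarrows M_0$ and $\mathcal{O}'\subseteq M_1$ its orbit under $G_1\rightrightarrows M_1$. As orbits of smooth groupoids, $\mathcal{O}$ and $\mathcal{O}'$ are immersed submanifolds, the smooth structure on $\mathcal{O}'$ being the unique one making the orbit map $\mathtt{t}^{M_1}\colon(\mathtt{s}^{M_1})^{-1}(x)\to\mathcal{O}'$ a surjective submersion, and likewise for $\mathtt{t}_{G_0}\colon\mathtt{s}_{G_0}^{-1}(x)\to\mathcal{O}$.

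Since $(\mathtt{s}^{G_0},\mathtt{s}_{M_1})$ and $(\mathtt{t}^{G_0},\mathtt{t}_{M_1})$ are groupoid morphisms, $\mathtt{s}_{M_1}$ and $\mathtt{t}_{M_1}$ map $\mathcal{O}'$ into the orbit of $x$, that is, into $\mathcal{O}$. Conversely, given $y\in\mathcal{O}$, pick $h\in G_0$ with $\mathtt{s}_{G_0}(h)=x$ and $\mathtt{t}_{G_0}(h)=y$; by the morphism $(\mathtt{u}^{G_0},\mathtt{u}_{M_1})$, the element $\mathtt{u}^{G_0}(h)$ is an arrow of $G_1\rightrightarrows M_1$ from $x$ to $y$, so $y\in\mathcal{O}'$. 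Hence $\mathcal{O}\subseteq\mathcal{O}'$, the maps $\mathtt{s}_{M_1}$ and $\mathtt{t}_{M_1}$ carry $\mathcal{O}'$ onto $\mathcal{O}$, and $\mathtt{u}_{M_1}$ carries $\mathcal{O}$ into $\mathcal{O}'$; similarly $(\mathtt{i}^{G_0},\mathtt{i}_{M_1})$ sends $\mathcal{O}'$ into the orbit of $\mathtt{i}_{M_1}(x)=x$, so $\mathtt{i}_{M_1}(\mathcal{O}')\subseteq\mathcal{O}'$.

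The multiplication is the main point. Let $g_1,g_2\in\mathcal{O}'$ be composable in $M_1\rightrightarrows M_0$, and choose $a,b\in G_1$ with $\mathtt{s}^{M_1}(a)=\mathtt{s}^{M_1}(b)=x$, $\mathtt{t}^{M_1}(a)=g_1$ and $\mathtt{t}^{M_1}(b)=g_2$. To apply $(\mathtt{m}^{G_0},\mathtt{m}_{M_1})$ one needs $a$ and $b$ composable in $G_1\rightrightarrows G_0$, i.e. $\mathtt{s}^{G_0}(a)=\mathtt{t}^{G_0}(b)$, which in general fails. But the morphism properties of $\mathtt{s}^{G_0}$ and $\mathtt{t}^{G_0}$ show that $\mathtt{s}^{G_0}(a)$ and $\mathtt{t}^{G_0}(b)$ are arrows of $G_0\rightrightarrows M_0$ with common source $x$ and common target $\mathtt{s}_{M_1}(g_1)=\mathtt{t}_{M_1}(g_2)$; let $c$ be the unique element of the isotropy group of $G_0\rightrightarrows M_0$ at $x$ with $\mathtt{t}^{G_0}(b)\cdot c=\mathtt{s}^{G_0}(a)$ (product in $G_0$). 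Then $b':=\mathtt{m}^{M_1}(b,\mathtt{u}^{G_0}(c))$ is again an arrow of $G_1\rightrightarrows M_1$ from $x$ to $g_2$, and $\mathtt{t}^{G_0}(b')=\mathtt{t}^{G_0}(b)\cdot c=\mathtt{s}^{G_0}(a)$, so $(a,b')$ is composable in $G_1\rightrightarrows G_0$. Now $\mathtt{m}^{G_0}(a,b')$ is an arrow of $G_1\rightrightarrows M_1$ with source $\mathtt{m}_{M_1}(x,x)=x$ and target $\mathtt{m}_{M_1}(g_1,g_2)$, whence $\mathtt{m}_{M_1}(g_1,g_2)\in\mathcal{O}'$; thus $\mathcal{O}'$ is closed under the multiplication of $M_1\rightrightarrows M_0$.

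Finally one checks smoothness. For the source, the double-source submersion hypothesis makes $\mathtt{s}^{G_0}$ restrict to a submersion $(\mathtt{s}^{M_1})^{-1}(x)\to\mathtt{s}_{G_0}^{-1}(x)$, which is moreover surjective (via $\mathtt{u}^{G_0}$), so $\mathtt{t}_{G_0}\circ\mathtt{s}^{G_0}$ is a surjective submersion onto $\mathcal{O}$; on the other hand the morphism property of $\mathtt{s}^{G_0}$ gives $\mathtt{t}_{G_0}\circ\mathtt{s}^{G_0}=\mathtt{s}_{M_1}|_{\mathcal{O}'}\circ\mathtt{t}^{M_1}$, and since submersions descend along the surjective submersion $\mathtt{t}^{M_1}$ the map $\mathtt{s}_{M_1}|_{\mathcal{O}'}\colon\mathcal{O}'\to\mathcal{O}$ is a (surjective) submersion. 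The remaining structure maps are handled the same way, by precomposing with the relevant orbit maps: $\mathtt{t}_{M_1}|_{\mathcal{O}'}$ via $\mathtt{t}^{G_0}$, $\mathtt{u}_{M_1}|_{\mathcal{O}}$ and $\mathtt{i}_{M_1}|_{\mathcal{O}'}$ via $\mathtt{u}^{G_0}$ and $\mathtt{i}^{G_0}$, and $\mathtt{m}_{M_1}$ by running the argument inside the fibered-product groupoid at $(x,x)$; all of them come out smooth. Hence $\mathcal{O}'\rightrightarrows\mathcal{O}$ is a smooth groupoid, and since the inclusions $\mathcal{O}'\hookrightarrow M_1$, $\mathcal{O}\hookrightarrow M_0$ are immersions intertwining every structure map, it is an immersed smooth subgroupoid of $M_1\rightrightarrows M_0$ over $\mathcal{O}$. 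The step I expect to be the main obstacle is the multiplication: this is the only place where the full compatibility of the two groupoid structures on $G_1$ is used, and getting it to work requires correcting $b$ by $\mathtt{u}^{G_0}(c)$ for a carefully chosen element $c$ of the isotropy group of $G_0\rightrightarrows M_0$ at $x$.
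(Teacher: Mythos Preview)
Your argument is correct and follows the same line as the paper's. The crucial multiplication step is handled identically in spirit: both you and the paper correct one of the two $G_1$-arrows by composing, in the $G_1\rightrightarrows M_1$ direction, with an element of $G_0$ viewed in $G_1$ via $\mathtt{u}^{G_0}$, so that the pair becomes composable in the $G_0$-direction; the paper corrects the first arrow while you correct the second, which is immaterial. Your treatment is in fact more complete than the paper's: the paper dispatches smoothness with the single remark that orbits of smooth groupoids are immersed submanifolds, whereas you go further and verify that the restricted source $\mathtt{s}_{M_1}|_{\mathcal{O}'}\colon\mathcal{O}'\to\mathcal{O}$ is a submersion, using the double-source submersion hypothesis to descend along the orbit map $\mathtt{t}^{M_1}$.
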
  
\begin{proof} Let $g,h \in G_1 $ be such that $\mathtt{s}^{M_1}(g)=\mathtt{s}^{M_1}(h)=u\in M_0$ and $\mathtt{t}^{G_0}(g)=x$, $\mathtt{t}^{G_0}(h)=y$ are composable, i.e. $\mathtt{s}_{M_1}(x)=\mathtt{t}_{M_1}(y)$. Then $k=\mathtt{m}^{M_1}\left( \mathtt{i}^{M_1}(\mathtt{s}^{G_0}(g)),\mathtt{t}^{G_0}(h)\right)$ is defined and so is $g'=\mathtt{m}^{M_1}(g,k)$. Since $\mathtt{s}^{G_0}(g')=\mathtt{t}^{G_0}(h)$ we can take $l=\mathtt{m}^{G_0}(g',h)$ and then $\mathtt{s}^{M_1}(l)=u$, $\mathtt{t}^{M_1}(l)=\mathtt{m}^{G_0}(x,y)$. So these orbits are subgroupoids. In general, the orbits of a smooth groupoid are immersed submanifolds even when the base is non Hausdorff. \end{proof} 
In general, we have to use an infinitesimal argument in order to prove the result.
 
	\begin{proof}[Proof of Theorem \ref{orblagr}] Let $\mathcal{O}$ be an $A_0$-orbit and let $\mathcal{O}'$ be the $A_1$-orbit which contains $\mathcal{O} $. Let us denote by $\mathtt{s}_{M_1}=\mathtt{s} $ and $\mathtt{t}_{M_1}=\mathtt{t} $ for simplicity in what follows. Recall that $A_{M_1}=\ker T\mathtt{s}|_{M_0}$ is the Lie algebroid of $M_1 \rightrightarrows M_0$ and $\mathtt{a}=T \mathtt{t}|_{A_{M_1}} $ is its anchor. This proof consists of the following steps: 
\begin{enumerate} \item we construct a Lie subalgebroid $B$ of the tangent Lie algebroid of $M_1 \rightrightarrows M_0$ over $\mathcal{O} $ such that the span of the right-invariant vector field in $M_1$ induced by its inclusion in $A_{M_1}$ coincides with $(T \mathcal{O}'\cap \ker T \mathtt{s})|_{\widehat{\mathcal{O}}}$;
\item using Lie's second theorem we show that $\widehat{\mathcal{O} }$ is the image of a Lie groupoid morphism $\Psi:H \rightarrow M_1$, where $H$ is the source-simply-connected integration of $B$. Since $\Psi$ is the identity on $\mathcal{O} $, we see that $\widehat{\mathcal{O} }=\Psi(H)$ inherits a groupoid structure over $\mathcal{O} $. Let us stress that our reliance on Lie's second theorem implies the existence of a groupoid structure only on $\widehat{\mathcal{O}}$, which can be seen as the ``source-connected component'' of $\mathcal{O}'$.%The smoothness of $\widehat{\mathcal{O} }$ follows from the fact that $\widehat{\mathcal{O} }$ is an open subset of $\mathcal{O}'$.
 \end{enumerate} 

{\em Step 1}. Since $\mathtt{s}_{A_1}$ is fiberwise surjective, the pullback of $A_1 $ to $M_0$ splits as the sum $A_1|_{M_0}=\ker \mathtt{s}_{A_1}|_{M_0}\oplus A_0$. 

By definition, we have that $T_x \mathcal{O}'=\mathtt{a}_1(A_1|_x)$ for all $x\in \mathcal{O} $. But the anchor map $\mathtt{a}_1$ of $A_1$, being a groupoid morphism, induces a square of morphisms of vector bundles over $G_0$:
		\[ \xymatrix{ \ker \mathtt{s}_{A_1}|_{M_0}\ar[d]_{\mathtt{a}_1|}  \ar[r]^{\mathtt{t}_{A_1}}  & A_0\ar[d]^{\mathtt{a}_0}  \\  A_{M_1} \ar[r]_{\mathtt{a}} & TM_0, } \] 
where $\mathtt{a}_0$ is the anchor of $A_0$. So we have that $T_x \mathcal{O}'=\mathtt{a}_1(\ker \mathtt{s}_{A_1}|_x)\oplus \mathtt{a}_0(A_0|_x)$ if $x\in \mathcal{O} $.

It is known that $C:=\ker \mathtt{s}_{A_1}|_{M_0}$ is a Lie algebroid over $M_0$ called the {\em core Lie algebroid of $A_1 \rightrightarrows A_0$} \cite{macdou}. Let us recall its structure maps: $\Gamma (C)$ is identified with the space of right invariant sections of $A_1$ as follows: to $X\in \Gamma (C)$ we associate the section $p\mapsto X^r(p)=\mathtt{m}_{A_1}(X(\mathtt{t}(p)),0_p)$ for all $p\in M_1$. By the compatibility of the bracket with the multiplication on $A_1$, the space of right invariant sections is closed under the bracket of $A_1$. Since $\mathtt{a}_1$ preserves the bracket and it takes right invariant sections of $A_1$ to right invariant vector fields on $M_1$, the map 
\[ \phi:=\mathtt{a}_1|:C \rightarrow A_{M_1} \] is a Lie algebroid morphism. As a consequence, $C$ is a Lie algebroid over $M_0$ with anchor $\mathtt{a}\circ \mathtt{a}_1|_C$.

Let $X$ be a section of $C$, define $\rho(X)\in \mathfrak{X}(\mathcal{O})$ as the vector field $(\mathtt{a}\circ \phi)(X)|_{\mathcal{O}  }=(\mathtt{a}_0\circ \mathtt{t}_{A_1})(X)|_{\mathcal{O} }  $. We have that $\rho$ defines an action Lie algebroid $i^*C$ over $\mathcal{O} $, where $i: \mathcal{O} \hookrightarrow M_0$ is the inclusion. In fact, take $X,Y\in \Gamma (C)$, then $\mathtt{t}_{A_1}|_p(X^r|_p)=\mathtt{t}_{A_1}|_{\mathtt{t}(p)}(X)$ for all $p\in G_1$. But $\mathtt{t}_{A_1}$ is a Lie algebroid morphism so we get that 
\[ \mathtt{t}_{A_1}|_p([X^r,Y^r]|_p)=[\mathtt{t}_{A_1}(X^r),\mathtt{t}_{A_1}(Y^r)]|_{\mathtt{t}( p)} \] 
for all $p\in M_1 $. Since $[X,Y]=[X^r,Y^r]|_{M_0}$ by definition, we get that 
\[ \mathtt{t}_{A_1}|_x([X,Y]|_x)=[\mathtt{t}_{A_1}(X),\mathtt{t}_{A_1}(Y)]|_x \] 
for all $x\in M_0$. Therefore, $\rho([X,Y])=\mathtt{a}_0([\mathtt{t}_{A_1}(X),\mathtt{t}_{A_1}(Y)])|_{\mathcal{O}} =[\rho(X ),\rho(Y)]$ since $\mathtt{a}_0$ commutes with the Lie brackets. 

By construction, $\phi$ induces a Lie algebroid morphism $\phi':i^*C \rightarrow A_{M_1}$ over $i$. Indeed, let $\sum_a f^a\otimes X_a$, $\sum_b g^b\otimes Y_b$ be elements of $C^\infty(\mathcal{O})\otimes_{C^\infty(M_0)} \Gamma (C)\cong \Gamma (i^*C)$. Then 
\begin{align*}  \phi'(\sum_a f^a\otimes X_a)=\sum_a f^a \otimes \phi(X_a) , \quad 
\phi'(\sum_b g^a\otimes Y_b)=\sum_b g^b \otimes \phi(Y_b) \end{align*} are elements of $C^\infty (\mathcal{O} )\otimes_{C^\infty(M_0)} \Gamma (A_{M_1}) \cong \Gamma (i^* A_{M_1})$. Hence we have that 
\begin{align*}  &\phi'([\sum_a f^a\otimes X_a,\sum_b g^b\otimes Y_b])=\sum_{a,b}f^ag^b\otimes [\phi(X_a),\phi(Y_b)]+ \\
&+\sum_{a,b}f^a L_{\rho(X_a)} g^b \otimes \phi(Y_b)-\sum_{a,b}g^bL_{\rho(Y_b)}f^a\otimes \phi(X_a); \end{align*}  
and so $\phi'$ is a Lie algebroid morphism, see \cite{moeint} for the definition of Lie algebroid morphism that we used.

The image $\phi'(i^*C)$ is a Lie subalgebroid of $A_{M_1}$ as long as it is a smooth subbundle. This follows from the following dimension counting: 
\[ \dim \mathcal{O}'_x=\rank \mathtt{a}_1|_{A_1|_x}= \rank \mathtt{a}_1|_{C_{x}}+\rank \mathtt{a}_0|_{x} \] 
for all $x\in \mathcal{O} $. The map $\mathtt{a}_1$ has constant rank over $\mathcal{O} $ and the term $\rank \mathtt{a}_0|_{x}$ is constant over $\mathcal{O} $ so $\phi'_x$ is of rank equal to $ \rank \mathtt{a}_1|_{C_{x}}$ which is constant for $x\in\mathcal{O} $. Since the anchor of $i^*C$ restricted to $\ker \phi'$ vanishes, we have that the quotient $B:=i^*C/\ker \phi'$ inherits a Lie algebroid structure over $\mathcal{O} $. Then $\phi'$ induces an injective Lie algebroid morphism $\psi:B \hookrightarrow A_{M_1}$ over $i$. 

{\em Step 2}. Since $\mathcal{O} \hookrightarrow M_0$ is an immersed submanifold and $B \hookrightarrow A_{M_1}$ is a Lie subalgebroid, Lie's second theorem implies that $B$ is integrable and the morphism $\psi: B \hookrightarrow A_{M_1}$ is integrable by a Lie groupoid morphism $\Psi:H \rightarrow M_1 $ which is an immersion, see \cite{moeint}. We claim that $\widehat{\mathcal{O}}=\Psi(H)$. In fact, $T{\mathcal{O}' }$ fits into the fiberwise exact sequence
\[ \xymatrix{0 \ar[r] & T \mathcal{O}'\cap \ker T \mathtt{s} \ar[r] & T \mathcal{O}'\ar[r]^{T \mathtt{s}}&  T \mathcal{O} \ar[r] & 0} \]   
and $T\Psi(H)$ spans both $(T \mathcal{O}'\cap \ker T \mathtt{s})|_{\widehat{\mathcal{O}}}$ and $T \mathcal{O}$ by construction. Since $\mathcal{O}'$ is weakly embedded in $M_1$ \cite{leeman}, $\Psi $ is also smooth with respect to the immersed submanifold structure on $\mathcal{O} $. Then $\widehat{\mathcal{O}}$ is an open subset of $\mathcal{O}$ thanks to the fact that $\Psi$ is an immersion and $\dim H=\dim \mathcal{O} $. Therefore, $\widehat{\mathcal{O}}$ is also an immersed weakly embedded submanifold. Finally, $\Psi$ is a groupoid morphism which is injective on its base so its image is a subgroupoid of $M_1$. Therefore, the result holds.    
\end{proof}
\begin{rema} It is natural to ask whether the groupoid structure that we find on $\widehat{\mathcal{O} }$ extends to the whole $A_1$-orbit $\mathcal{O}'$. We do not know if that is the case. Let us note that, thanks to Proposition \ref{orbdougro}, a counterexample would have to come to from an LA-groupoid which does not admit any source-connected double groupoid integrating it, even in a weak sense as the Weinstein groupoids integrate general Lie algebroids \cite{crarui}. \end{rema} 
\section{Applications}
\subsection{Symplectic leaves of Poisson groupoids}
\begin{defi}[\cite{weicoi}] A {\em Poisson groupoid} is a Lie groupoid $M_1\rightrightarrows M_0$ with a Poisson structure on $M_1$ such that the graph of the multiplication map is a coisotropic submanifold\footnote{Let $M$ be a Poison manifold with Poisson tensor $\pi$. A submanifold $C$ of $M$ is coisotropic if $\pi^\sharp(T^\circ C)\subset TC$, where $T^\circ C$ is the annihilator of $TC$. } of $M_1\times M_1\times \overline{M}_1$, where $\overline{M}_1$ denotes $M_1$ with the opposite Poisson structure. \end{defi}
A Poisson groupoid over a point is a {\em Poisson group} \cite{driham}; a Poisson groupoid whose Poisson bracket is nondegenerate, and hence induced by a symplectic form, is a {\em symplectic groupoid} \cite{weisygr,karpoi}.
The infinitesimal counterpart of a Poisson groupoid is provided by the following objects.
\begin{defi}[\cite{macxu}] Consider a Lie algebroid $A$. If $A^*$ possesses also a Lie algebroid structure such that its differential $d_*$ satisfies
\[ d_*[X,Y]=[d_*X,Y]+[X,d_*Y], \] 
	for all $X,Y\in\Gamma(A)$, then we call $(A,A^*)$ a {\em Lie bialgebroid}. \end{defi}
The cotangent bundle of a Poisson manifold is canonically endowed with a Lie algebroid structure. In the case of a Poisson groupoid $M_1 \rightrightarrows M_0$ such a Lie algebroid structure is compatible with the cotangent groupoid structure thus producing an LA-groupoid. In fact, it follows from equation \eqref{eq:cotgro} that the canonical symplectic form on $T^*M_1$ makes $T^*M_1\rightrightarrows A_{M_1}^*$ into a symplectic groupoid; from this fact it follows that the conormal bundle of the graph $\Gamma$ of $\mathtt{m}$ in $G_1^3$ coincides with the bundle \[E=\{(\xi,\eta,-\hat{\mathtt{m}}(\xi,\eta)): \text{$\xi$, $\eta\in T^*M_1$ composable}\}\subset (T^*M_1)^3 \] 
since $E\subset (T\Gamma)^\circ$ and both vector bundles have the same rank. Using these observations it is not difficult to prove the following.
\begin{thm}[\cite{macxu}]\label{poimap} Let $\Pi$ be a bivector field on a Lie groupoid $M_1 \rightrightarrows M_0$. The map $\Pi^\sharp :T^*{M}_1\rightarrow T{M}_1$ is a groupoid morphism if and only if the graph of $\mathtt{m}$ in $(M_1^3,\Pi\oplus \Pi\ominus \Pi)$ is coisotropic.
\end{thm}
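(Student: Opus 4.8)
The plan is to unravel each side of the claimed equivalence into pointwise conditions on the bundle map $\Pi^\sharp$ and then match them. Write $\mathtt{s},\mathtt{t},\mathtt{m}$ for the structure maps of $M_1\rightrightarrows M_0$, write $\hat{\mathtt{s}},\hat{\mathtt{t}}$ for the source and target maps of the cotangent groupoid $T^*M_1\rightrightarrows A_{M_1}^*$ (so that $\hat{\mathtt{m}}$ is as in \eqref{eq:cotgro}), and write $\Gamma\subset M_1^3$ for the graph of $\mathtt{m}$. The Poisson tensor $\Pi\oplus\Pi\ominus\Pi$ has sharp map $(\xi,\eta,\zeta)\mapsto(\Pi^\sharp\xi,\Pi^\sharp\eta,-\Pi^\sharp\zeta)$, and the two linear spaces to be compared at a point $(g,h,gh)$ are
\begin{align*}
T^\circ_{(g,h,gh)}\Gamma&=\{(\xi,\eta,-\hat{\mathtt{m}}(\xi,\eta)):\hat{\mathtt{s}}(\xi)=\hat{\mathtt{t}}(\eta)\},\\
T_{(g,h,gh)}\Gamma&=\{(u,v,T\mathtt{m}(u,v)):T\mathtt{s}(u)=T\mathtt{t}(v)\};
\end{align*}
here the first equality is the identification $T^\circ\Gamma=E$ recalled before the theorem (so $T^\circ\Gamma$ is, up to a sign in the last slot, the graph of $\hat{\mathtt{m}}$) and the second is the definition of the tangent groupoid (so $T\Gamma$ is the graph of $T\mathtt{m}$). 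Feeding a generic conormal vector into the sharp map gives $(\Pi^\sharp\xi,\Pi^\sharp\eta,\Pi^\sharp\hat{\mathtt{m}}(\xi,\eta))$, so reading off membership in $T\Gamma$ shows that $\Gamma$ is coisotropic if and only if, for every composable pair $\xi\in T^*_gM_1$, $\eta\in T^*_hM_1$ (that is, $\hat{\mathtt{s}}(\xi)=\hat{\mathtt{t}}(\eta)$), both
\begin{align}\label{eq:twoconds}
T\mathtt{s}(\Pi^\sharp\xi)=T\mathtt{t}(\Pi^\sharp\eta)\qquad\text{and}\qquad\Pi^\sharp(\hat{\mathtt{m}}(\xi,\eta))=T\mathtt{m}(\Pi^\sharp\xi,\Pi^\sharp\eta)
\end{align}
hold.

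The second identity in \eqref{eq:twoconds} is exactly the statement that $\Pi^\sharp$ intertwines the cotangent-groupoid multiplication with the tangent-groupoid multiplication, so the proof reduces to showing that the first identity in \eqref{eq:twoconds}, imposed on all composable pairs, is equivalent to $\Pi^\sharp$ being compatible with source and target over a common base map. Concretely, I would prove that the first identity holds for all composable $\xi,\eta$ if and only if there is a (necessarily unique and smooth) map $j:A_{M_1}^*\to TM_0$ with $T\mathtt{s}\circ\Pi^\sharp=j\circ\hat{\mathtt{s}}$ and $T\mathtt{t}\circ\Pi^\sharp=j\circ\hat{\mathtt{t}}$. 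One direction is immediate; for the other, given $a\in A_{M_1}^*$ choose $\xi\in\hat{\mathtt{s}}^{-1}(a)$ — possible since $\hat{\mathtt{s}}$ is a surjective submersion — and set $j(a):=T\mathtt{s}(\Pi^\sharp\xi)$. Independence of the choice of $\xi$ follows by comparing, via the first identity in \eqref{eq:twoconds}, with any $\eta\in\hat{\mathtt{t}}^{-1}(a)$; the relation $T\mathtt{t}\circ\Pi^\sharp=j\circ\hat{\mathtt{t}}$ follows the same way; and smoothness of $j$ follows by composing with local sections of $\hat{\mathtt{s}}$.

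Putting this together, $\Gamma$ is coisotropic if and only if $\Pi^\sharp:T^*M_1\to TM_1$ is a map of Lie groupoids over some $j:A_{M_1}^*\to TM_0$ that is compatible with source, target and multiplication; since $\hat{\mathtt{s}}$ is surjective, compatibility with units — and therefore with inversion — is then automatic by left cancellation, so this is precisely the condition that $\Pi^\sharp$ be a Lie groupoid morphism. Reading the argument backwards, if $\Pi^\sharp$ is a morphism then source- and target-compatibility give the first identity in \eqref{eq:twoconds} and multiplicativity gives the second, hence $\Gamma$ is coisotropic. I expect the only genuine obstacle to be the statement about the base map $j$: promoting the condition ``$\Pi^\sharp$ sends composable pairs to composable pairs'' to an honest globally defined smooth map over which $\Pi^\sharp$ is a morphism is where one must really use that $\hat{\mathtt{s}}$ and $\hat{\mathtt{t}}$ are surjective submersions; everything else is formal manipulation of \eqref{eq:cotgro} and the definitions of the tangent and cotangent groupoids.
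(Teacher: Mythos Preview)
Your proposal is correct and follows exactly the approach the paper indicates: the paper does not spell out a proof but records the key identification $T^\circ\Gamma=E=\{(\xi,\eta,-\hat{\mathtt{m}}(\xi,\eta))\}$ and says that from this the theorem ``is not difficult to prove''; your argument is precisely the straightforward unravelling that this remark calls for. The only part you add beyond what the paper sketches is the routine verification that the composability condition in \eqref{eq:twoconds} is equivalent to the existence of a smooth base map $j$, which is standard and correctly handled.
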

As a consequence of Theorem \ref{poimap}, if $(M_1 , \Pi)$ is a Poisson groupoid, then its cotangent groupoid with the Lie bracket induced by $\Pi$ is an LA-groupoid. 

Let $A$ be the Lie algebroid of a Poisson groupoid $M_1 \rightrightarrows M_0$. We have that $M_0$ inherits a unique Poisson structure such that the target map $\mathtt{t}:M_1 \rightarrow M_0 $ is a Poisson morphism \cite{weicoi}. The symplectic leaves of this Poisson structure are contained in the connected components of the intersection of the $A$ and $A^*$-orbits \cite{macxu}. Let $\mathcal{O}$ be an $A^*$-orbit, we have that ${\mathcal{O}  }$ is a Poisson submanifold of $M_0$, being a union of symplectic leaves. Theorem \ref{orblagr} applied to $T^*M_1 \rightrightarrows A^*$ gives us a symplectic groupoid structure on $\widehat{\mathcal{O}  } \rightrightarrows {\mathcal{O}  }$ which integrates this Poisson structure.
\begin{prop}\label{cotpoigro} Let $M_1\rightrightarrows M_0$ be a Poisson groupoid with tangent Lie bialgebroid $(A,A^*)$ and let $\mathcal{O}\subset M_0 $ be an $A^*$-orbit. Then $\widehat{\mathcal{O}  }$ is an immersed subgroupoid of $M_1$ and it inherits a symplectic groupoid structure which integrates the Poisson structure on $\mathcal{O}$. \end{prop}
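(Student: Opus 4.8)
The plan is to apply Theorem~\ref{orblagr} to the cotangent LA-groupoid of $(M_1,\Pi)$ and then upgrade the resulting Lie groupoid to a symplectic one using the symplectic foliation of $M_1$. By the discussion following Theorem~\ref{poimap}, since $(M_1,\Pi)$ is a Poisson groupoid the cotangent groupoid $T^*M_1\rightrightarrows A^*$, with the Lie bracket induced by $\Pi$, is an LA-groupoid over $M_1\rightrightarrows M_0$ whose base Lie algebroid is $A^*$; applying Theorem~\ref{orblagr} to the $A^*$-orbit $\mathcal O$ yields the immersed Lie subgroupoid $\widehat{\mathcal O}\rightrightarrows\mathcal O$ of $M_1\rightrightarrows M_0$, which is the first assertion. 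Next I would observe that the anchor of the cotangent Lie algebroid $T^*M_1\to M_1$ is $\Pi^\sharp$, so its orbit $\mathcal O'\supset\mathcal O$ is precisely the symplectic leaf of $(M_1,\Pi)$ through any point of $\mathcal O$; let $\omega$ denote its leafwise symplectic form. Since $\widehat{\mathcal O}$ is an open subset of $\mathcal O'$ (as shown in the proof of Theorem~\ref{orblagr}), the restriction $\omega_{\widehat{\mathcal O}}:=\omega|_{\widehat{\mathcal O}}$ is a symplectic form on $\widehat{\mathcal O}$; set $\Pi_{\widehat{\mathcal O}}:=\omega_{\widehat{\mathcal O}}^{-1}$.

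The core of the argument is to check that $\omega_{\widehat{\mathcal O}}$ is multiplicative, equivalently that $(\widehat{\mathcal O},\Pi_{\widehat{\mathcal O}})$ is a Poisson groupoid over $\mathcal O$; being nondegenerate it is then a symplectic groupoid. By definition this means the graph $\Gamma_{\widehat{\mathcal O}}$ of $\mathtt m_{\widehat{\mathcal O}}$ is coisotropic in $\widehat{\mathcal O}\times\widehat{\mathcal O}\times\overline{\widehat{\mathcal O}}$. Now $\widehat{\mathcal O}\times\widehat{\mathcal O}\times\widehat{\mathcal O}$ is an open subset of $\mathcal O'\times\mathcal O'\times\mathcal O'$, which is a symplectic leaf of $(M_1\times M_1\times\overline{M}_1,\,\Pi\oplus\Pi\ominus\Pi)$ carrying the symplectic form $\omega\oplus\omega\ominus\omega$, so the Poisson bivector on this open set is exactly $\Pi_{\widehat{\mathcal O}}\oplus\Pi_{\widehat{\mathcal O}}\ominus\Pi_{\widehat{\mathcal O}}$. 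Since $\widehat{\mathcal O}\rightrightarrows\mathcal O$ is a subgroupoid of $M_1\rightrightarrows M_0$, the map $\mathtt m_{\widehat{\mathcal O}}$ is a restriction of $\mathtt m_{M_1}$ and $\Gamma_{\widehat{\mathcal O}}$ is an open subset of $\Gamma_{M_1}\cap(\mathcal O'\times\mathcal O'\times\mathcal O')$, where $\Gamma_{M_1}$ is the graph of $\mathtt m_{M_1}$; and $\Gamma_{M_1}$ is coisotropic in $(M_1\times M_1\times\overline{M}_1,\,\Pi\oplus\Pi\ominus\Pi)$ because $(M_1,\Pi)$ is a Poisson groupoid. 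I would then conclude from the general fact that the clean intersection of a coisotropic submanifold with a symplectic leaf is coisotropic in that leaf. The required cleanness, $T\Gamma_{\widehat{\mathcal O}}=T\Gamma_{M_1}\cap T(\mathcal O'\times\mathcal O'\times\mathcal O')$, follows from $\widehat{\mathcal O}$ being open in $\mathcal O'$ together with the subgroupoid property: for composable $u,v$ tangent to $\mathcal O'$ one has $T\mathtt m(u,v)$ tangent to $\mathcal O'$ as well, since $\mathtt m$ carries composable pairs of $\widehat{\mathcal O}$ into $\widehat{\mathcal O}\subset\mathcal O'$.

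Finally, the symplectic groupoid $\widehat{\mathcal O}\rightrightarrows\mathcal O$ induces on $\mathcal O$ the unique Poisson structure for which $\mathtt t_{\widehat{\mathcal O}}$ is a Poisson submersion. But $\mathtt t_{\widehat{\mathcal O}}$ is the composite of the inclusion $\widehat{\mathcal O}\hookrightarrow M_1$ of an open subset of a symplectic leaf, which is a Poisson map, with the Poisson map $\mathtt t_{M_1}:M_1\to M_0$, corestricted to the Poisson submanifold $\mathcal O\subset M_0$; hence $\mathtt t_{\widehat{\mathcal O}}:\widehat{\mathcal O}\to\mathcal O$ is a Poisson submersion onto $\mathcal O$ with its induced Poisson structure, and by uniqueness $\widehat{\mathcal O}\rightrightarrows\mathcal O$ integrates precisely that structure. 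I expect the multiplicativity step to be the main obstacle: one must keep track of $\widehat{\mathcal O}$ simultaneously as an open piece of a symplectic leaf of $M_1$ and as an immersed subgroupoid, and carefully justify the clean-intersection input to the coisotropic-restriction lemma, in particular its interaction with the ``connected components containing a unit'' clause built into the definition of $\widehat{\mathcal O}$.
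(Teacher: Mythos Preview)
Your proposal is correct and follows essentially the same line as the paper's proof. The paper is much terser: it simply asserts that $\text{graph}(\mathtt{m}_{M_1})\cap(\widehat{\mathcal O}\times\widehat{\mathcal O}\times\overline{\widehat{\mathcal O}})=\text{graph}(\mathtt m_{\widehat{\mathcal O}})$ is coisotropic in $\widehat{\mathcal O}\times\widehat{\mathcal O}\times\overline{\widehat{\mathcal O}}$, without spelling out the clean-intersection-with-a-leaf argument you supply, and then identifies the induced Poisson structure on $\mathcal O$ by exactly the same observation that the inclusion $\widehat{\mathcal O}\hookrightarrow M_1$ composed with $\mathtt t_{M_1}$ is Poisson. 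One minor remark: in your cleanness check the conclusion ``$T\mathtt m(u,v)$ is tangent to $\mathcal O'$'' is already part of the hypothesis, so the only thing needed is that $T_x\widehat{\mathcal O}=T_x\mathcal O'$ at points of $\widehat{\mathcal O}$, which you note follows from openness.
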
 
\begin{proof} The symplectic structure on $\widehat{\mathcal{O}  }$ is automatically multiplicative: if we see it as a nondegenerate Poisson bracket, then we have that $\text{graph}(\mathtt{m}_G)\cap(\widehat{\mathcal{O}  } \times \widehat{\mathcal{O}  } \times \overline{\widehat{\mathcal{O}  }})=\text{graph}(\mathtt{m}_{\widehat{\mathcal{O}  }})$ is coisotropic in $\widehat{\mathcal{O}  } \times \widehat{\mathcal{O}  } \times \overline{\widehat{\mathcal{O}  }}$.  

Now we just have to compare the Poisson structure on $\mathcal{O}  $ induced by the target map of $\widehat{\mathcal{O}  }$ with the Poisson structure on $\mathcal{O}  $ induced as a submanifold of $M_0$. We have that $i:\widehat{\mathcal{O}  } \hookrightarrow M_1$ is a Poisson morphism and hence so it is the composition $\mathtt{t}\circ i:\widehat{\mathcal{O}  } \rightarrow M_0$. But $\mathtt{t}\circ i$ is the target map of $\widehat{\mathcal{O}  } \rightrightarrows {\mathcal{O}  }$ so both Poisson structures on ${\mathcal{O}  }$ coincide. \end{proof}\subsection{Orbits of Lie 2-algebra actions on Lie groupoids} The symmetries of Lie groupoids are described by the following objects.
\begin{defi}[\cite{brsp,grp}] A {\em Lie 2-group} is a groupoid object in the category of Lie groups. \end{defi} A groupoid in the category of (Lie) groups is the same thing as a group in the category of (Lie) groupoids. This means that, given a Lie 2-group, $\mathcal{G}:G_1\rightrightarrows G_0$, the multiplication maps $G_i\times G_i\rightarrow G_i$ and the group inversion map $G_i\rightarrow G_i$, $i=0,1$, determine Lie groupoid morphisms, in the first case from $\mathcal{G}\times \mathcal{G}$ to $\mathcal{G} $ and in the second from $\mathcal{G}$ to itself.

\begin{defi}[\cite{bacr}] A groupoid in the category of Lie algebras is called a {\em strict Lie 2-algebra}. An abelian Lie 2-algebra is called a {\em 2-vector space}. \end{defi}

It is straightforward to see that the Lie functor establishes an equivalence of categories between the categories of 1-connected Lie 2-groups and Lie 2-algebras.

Lie 2-algebras are equivalent to crossed modules of Lie algebras.
\begin{defi}[\cite{bacr}] Let $\phi:\mathfrak{h}\rightarrow\mathfrak{g}$ be a Lie algebra morphism and let $\mathfrak{g}$ act on $\mathfrak{h}$ by derivations in such a way that for all $x\in \mathfrak{g}$ and $a,b\in\mathfrak{h}$
\begin{align*} 
\phi(x\cdot a)&=[x,\phi(a)] \\
\phi(a)\cdot b&=[a,b]; 
\end{align*}
then this structure is called a {\em differential crossed module or a crossed module of Lie algebras}. The Lie 2-algebra associated to $\phi$ is the action groupoid $\mathfrak{G} :\mathfrak{h}\rtimes \mathfrak{g} \rightrightarrows \mathfrak{g}$ with action map $\mathtt{t} (a,x)=\phi(a)+x$, for $a\in \mathfrak{h}$, $x\in \mathfrak{g}$. Here $\mathfrak{h}\rtimes \mathfrak{g}$ is the semidirect product Lie algebra given by the $\mathfrak{g}$-action. \end{defi}
Let $M_1 \rightrightarrows M_0$ be a Lie groupoid and let $\mathfrak{G} =\mathfrak{h} \rtimes \mathfrak{g} \rightrightarrows \mathfrak{g} $ be a Lie 2-algebra. Suppose that we have an infinitesimal action $ \mathfrak{G} \rightarrow \mathfrak{X}(M_1)$ such that the induced vector bundle map $\mathtt{a}:\mathfrak{G} \times M_1 \rightarrow TM_1$ is a groupoid morphism with the product groupoid structure on $\mathfrak{G} \times M_1$. Then the action Lie algebroid $\mathfrak{G} \times M_1$ becomes an LA-groupoid over $M_1$. As another corollary of Theorem \ref{orblagr} we get the following result.
\begin{prop}\label{orblie2alg} Let $\mathcal{O}\subset M_0$ be a $\mathfrak{g}$-orbit, then $\widehat{\mathcal{O} }$ is a Lie subgroupoid of $M_1$ over $\mathcal{O} $. \qed \end{prop}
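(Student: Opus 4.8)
The plan is to derive the proposition directly from Theorem \ref{orblagr}, applied to the LA-groupoid $\mathfrak G\times M_1$ constructed immediately before the statement. First I would spell out the data that goes into Theorem \ref{orblagr}. Set $A_1:=(\mathfrak h\rtimes\mathfrak g)\times M_1$, regarded as the action Lie algebroid over $M_1$ of the $(\mathfrak h\rtimes\mathfrak g)$-action, whose anchor is the given map $\mathtt a\colon\mathfrak G\times M_1\to TM_1$; and set $A_0:=\mathfrak g\times M_0$, the action Lie algebroid over $M_0$ of the infinitesimal $\mathfrak g$-action on $M_0$ obtained by restricting $\mathtt a$ to units (this restriction is tangent to $M_0$ precisely because $\mathtt a$ is a groupoid morphism over $M_1\rightrightarrows M_0$ and $(0,x)$ is a unit of $\mathfrak h\rtimes\mathfrak g\rightrightarrows\mathfrak g$). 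As a Lie groupoid, $A_1\rightrightarrows A_0$ is the product of the action groupoid $\mathfrak h\rtimes\mathfrak g\rightrightarrows\mathfrak g$ with $M_1\rightrightarrows M_0$, and it sits over the base groupoid $M_1\rightrightarrows M_0$.

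Next I would check that $A_1\rightrightarrows A_0$ is an LA-groupoid in the precise sense of the definition — this is exactly what the sentence preceding the proposition asserts, so the write-up is essentially a matter of unwinding that assertion. The two clauses worth making explicit are: the source and target of $A_1\rightrightarrows A_0$ are the projection $A_1\to A_0$ composed with $\mathtt s$, $\mathtt t$ of the two factor groupoids, and these are Lie algebroid morphisms because $\mathtt a$ is a groupoid morphism for the product groupoid structure and the factor maps are morphisms of the respective action algebroids; and the map $A_1\to\mathtt s_{M_1}^{*}A_0$ induced by $\mathtt s_{A_1}$ is fibrewise surjective because the source of $\mathfrak h\rtimes\mathfrak g\rightrightarrows\mathfrak g$ is the linear projection $(a,x)\mapsto x$.

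Finally I would translate the hypothesis of the proposition into the hypothesis of Theorem \ref{orblagr}: since the anchor of $A_0=\mathfrak g\times M_0$ is the infinitesimal $\mathfrak g$-action, the $A_0$-orbits are precisely the $\mathfrak g$-orbits in $M_0$. Hence, given a $\mathfrak g$-orbit $\mathcal O\subset M_0$, Theorem \ref{orblagr} applies verbatim and yields that $\widehat{\mathcal O}$ is an immersed Lie subgroupoid of $M_1$ over $\mathcal O$, which is the claim.

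I do not expect a real obstacle: the statement is given without proof precisely because it is an immediate corollary of Theorem \ref{orblagr}, and the only thing that takes a moment is the bookkeeping above — confirming that the product-groupoid/action-algebroid package satisfies the LA-groupoid axioms, and that ``$A_0$-orbit'' unwinds to ``$\mathfrak g$-orbit''. The one genuinely non-trivial question one might raise here — whether $\widehat{\mathcal O}$ is the full $A_1$-orbit through $\mathcal O$ rather than just its source-connected part — is exactly the issue left open in the remark following Theorem \ref{orblagr}, and I would not attempt to resolve it in this setting; Example \ref{nonintlagr} already shows that phenomena of this kind occur.
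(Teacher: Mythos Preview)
Your proposal is correct and matches the paper's approach: the proposition is stated with a \qed\ precisely because it is an immediate corollary of Theorem~\ref{orblagr} applied to the action LA-groupoid $\mathfrak{G}\times M_1$ described just before the statement. The unwinding you give---identifying $A_0$-orbits with $\mathfrak g$-orbits and checking the LA-groupoid axioms---is exactly the bookkeeping the paper leaves implicit.
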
 
\subsection{Dressing orbits of Poisson 2-groups} Now se shall see a family of examples that illustrate both Proposition \ref{cotpoigro} and Proposition \ref{orblie2alg}.
\begin{defi} Consider a Lie 2-algebra $\mathfrak{G}:\mathfrak{h}\rtimes \mathfrak{g} \rightrightarrows \mathfrak{g}$ associated to the differential crossed module $\phi:\mathfrak{h}\rightarrow \mathfrak{g}$. The map $-\phi^*:\mathfrak{g}^*\rightarrow \mathfrak{h}^*$ produces an action groupoid (given by the induced action by translations) we shall denote $\mathfrak{G}^*$; the source and target maps of $\mathfrak{G}^*$ have to be reversed in order to have the pairing $\mathfrak{G}^*\times \mathfrak{G}\rightarrow \mathbb{R}$ as a groupoid morphism, that is, for all $(\alpha,\theta)\in(\mathfrak{h}\rtimes \mathfrak{g})^*$:
\begin{align*} &\mathtt{s}(\alpha,\theta)=\alpha-\phi^*(\theta), \quad \mathtt{t} (\alpha,\theta)=\alpha. \end{align*} \end{defi} 
	\begin{defi}\label{coaact} The coadjoint action of a Lie 2-group $\mathcal{G}:H\rtimes G \rightrightarrows G$ on $\mathfrak{G}^*$ is the usual (right) coadjoint action of $H\rtimes G$ on $\left(\mathfrak{h}\rtimes \mathfrak{g}\right)^*$ at the level of arrows and, at the level of objects, it is the dual of the $G$-action on $\mathfrak{h}$. \end{defi} \begin{rema} The coadjoint action above is multiplicative in the sense that the action map $ \mathfrak{G}^* \times\mathcal{G} \rightarrow \mathfrak{G}^*  $ is a groupoid morphism. \end{rema} 

\begin{defi}[\cite{poi2gro}] A Poisson 2-group is a Lie 2-group which is a Poisson groupoid and whose group of arrows is a Poisson group with the same Poisson bivector.
\end{defi}
\begin{exa} The pair groupoid of a Poisson group is a Poisson 2-group. \end{exa}
\begin{exa}\label{coaorb2gro} Let $\mathcal{G} $ be a Lie 2-group over $G$. Then the dual of its Lie 2-algebra $\mathfrak{G}^*$ is a Poisson 2-group with respect to the vector space addition as group structure and with the classical Lie-Poisson bracket as its Poisson structure. This follows from Theorem \ref{poimap} by observing that the coadjoint action $\mathfrak{G}^* \times  \mathcal{G}\rightarrow \mathfrak{G}^*$ of Definition \ref{coaact} is multiplicative and hence the contraction map $\Pi^\sharp:T^* \mathfrak{G}^*\cong \mathfrak{G}^* \times \mathfrak{G}  \rightarrow T \mathfrak{G}$ is a groupoid morphism since it is just the associated infinitesimal action. \end{exa} 
The Lie algebroid structure induced by the Poisson bracket on the cotangent bundle of a Poisson group is isomorphic to an infinitesimal Lie algebra action called {\em dressing action} \cite{semdres}. If $\mathcal{G} $ is a Poisson group, we have that the cotangent groupoid $T^* \mathcal{G} $ is isomorphic to the product groupoid $\mathcal{G} \times \mathfrak{G}^*$ and, as a consequence of the previous observation and Theorem \ref{poimap}, there is a Lie 2-algebra structure on $\mathfrak{G}^*$ in such a way that the dressing action $\Pi^\sharp: T^* \mathcal{G} \cong \mathcal{G} \times \mathfrak{G}^* \rightarrow T \mathcal{G} $ is a groupoid morphism. Proposition \ref{cotpoigro} implies the following: 
\begin{coro}\label{orbpoi2gro} Let $\mathcal{G} $ be a Poisson 2-group over $G$ with tangent Lie bialgebroid $(A,A^*)$ and let $\mathcal{O}$ be an $A^*$-orbit on $G$. Then $\widehat{\mathcal{O} }$ is a symplectic groupoid. \qed\end{coro}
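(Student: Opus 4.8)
The plan is to obtain the statement as a direct specialisation of Proposition~\ref{cotpoigro}, the only thing to check being that a Poisson 2-group meets its hypotheses. First I would unravel the definitions. A Poisson 2-group is, by definition, a Lie 2-group $\mathcal{G}:G_1\rightrightarrows G$ together with a bivector $\Pi$ on $G_1$ which is multiplicative both for the groupoid structure $G_1\rightrightarrows G$ and for the group structure on $G_1$; in particular $(G_1\rightrightarrows G,\Pi)$ is a Poisson groupoid, so it carries a tangent Lie bialgebroid $(A,A^*)$ with $A=A_{G_1}=\ker T\mathtt{s}|_{G}$. By Theorem~\ref{poimap}, the cotangent groupoid $T^*G_1\rightrightarrows A^*$, equipped with the Lie algebroid structures induced by $\Pi$, is an LA-groupoid over $G_1\rightrightarrows G$ whose object Lie algebroid is precisely $A^*$. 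Hence, for an $A^*$-orbit $\mathcal{O}\subset G$, Proposition~\ref{cotpoigro} applies verbatim and produces an immersed subgroupoid $\widehat{\mathcal{O}}\subset G_1$ over $\mathcal{O}$ together with a symplectic groupoid structure integrating the Poisson structure that $\mathcal{O}$ inherits as a Poisson submanifold of $G$, where $G$ carries the unique Poisson structure making $\mathtt{t}:G_1\to G$ a Poisson map. This is already the full content of the corollary.

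It is worth recording the reading that motivates the statement. Following the discussion preceding the corollary, the $A^*$-orbits $\mathcal{O}\subset G$ are the dressing orbits of the Poisson 2-group, and each of them is a Poisson submanifold of $G$, being a union of symplectic leaves; the corollary then says that the source-connected component $\widehat{\mathcal{O}}\subset G_1$ of such an orbit, endowed with the restriction of the ambient symplectic form, is a symplectic groupoid over $\mathcal{O}$. Specialising to the Lie-Poisson structure on the dual $\mathfrak{G}^*$ of the Lie 2-algebra of a Lie 2-group, as in Example~\ref{coaorb2gro}, these dressing orbits are coadjoint orbits, so the corollary recovers the assertion in the abstract that the coadjoint orbits of the units in $\mathfrak{G}^*$, with their Kostant-Kirillov-Souriau forms, are symplectic groupoids; equivalently, those symplectic forms are multiplicative.

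Since every step is either a definitional unravelling or a citation of Proposition~\ref{cotpoigro} (which itself rests on Theorem~\ref{orblagr}), I do not expect any genuine obstacle here. The only point demanding a little care is the bookkeeping: identifying the object Lie algebroid of the LA-groupoid $T^*G_1\rightrightarrows A^*$ with $A^*$, and checking that the Poisson structure which $\widehat{\mathcal{O}}$ integrates is exactly the one that $\mathcal{O}$ receives as a submanifold of $G$. Both are handled exactly as in the proof of Proposition~\ref{cotpoigro}, the second being the content of its last paragraph.
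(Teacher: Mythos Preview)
Your proposal is correct and matches the paper's approach exactly: the corollary is stated with a \qed\ and is presented as an immediate consequence of Proposition~\ref{cotpoigro}, the only observation needed being that a Poisson 2-group is in particular a Poisson groupoid. Your additional commentary on dressing orbits and the coadjoint specialisation accurately reflects the surrounding discussion in the paper.
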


Now we shall see a situation in which the cotangent groupoid of a Poisson 2-group is integrable by a double Lie groupoid. Suppose that a Lie 2-algebra $\mathfrak{G}$ is such that there is also a Lie 2-algebra structure on the 2-vector space $ \mathfrak{G}^*$ with the property that $(\mathfrak{G},\mathfrak{G}^*)$ is a usual Lie bialgebra. Then we say that $(\mathfrak{G},\mathfrak{G}^*)$ is a {\em Lie 2-bialgebra} \cite{poi2gro}. The classical correspondence between Lie bialgebras and 1-connected Poisson groups implies, in particular, that 1-connected Poisson 2-groups are classified by Lie 2-bialgebras \cite{poi2gro}. If $(\mathfrak{G},\mathfrak{G}^*)$ is a Lie 2-bialgebra, it is immediate to check that the double $\mathfrak{G}\oplus \mathfrak{G}^*$ with its classical Lie bracket \cite{driham,luphd} is also a Lie 2-algebra.

Suppose that $\mathcal{G} $ and $\mathcal{G}^*$ are 1-connected Poisson 2-groups which correspond to the Lie 2-bialgebra $(\mathfrak{G},\mathfrak{G}^*)$. Then there are Lie 2-group morphisms $i:\mathcal{G} \rightarrow \mathcal{D} $ and $j:\mathcal{G}^* \rightarrow \mathcal{D}$, where $\mathcal{D}$ is the 1-connected integration of the double $\mathfrak{G}\oplus \mathfrak{G}^*$. Let us recall the integration of the Poisson structures on Poisson groups given in \cite{luwei2}. We take 
		\begin{align*} \mathcal{S} =\{(g,u,v,h)\in \mathcal{G}   \times \mathcal{G}^* \times \mathcal{G}^* \times \mathcal{G} : i(g)j(u)=i(v)j(h)\}. \end{align*} 
The source and target of this groupoid are the projections to $\mathcal{G}  $ and the multiplication is given by
\begin{align*}  \mathtt{m}( (a,u,v,b),(b,u',v',c))=(a,uu',vv',c); \end{align*} 
see \cite{luphd} for a description of the symplectic form on $\mathcal{S} $. Since $i$ and $j$ are Lie 2-group morphisms, we have that $\mathcal{S} $ is a double Lie groupoid with sides $\mathcal{G} $ and $S_0= \{(g,u,v,h)\in \mathcal{G}_0   \times \mathcal{G}^*_0 \times \mathcal{G}^*_0 \times \mathcal{G}_0 : i(g)j(u)=i(v)j(h)\}$ over $\mathcal{G}_0$, where $\mathcal{G}_0$ and $\mathcal{G}_0^*$ denote the groups of units of the corresponding Lie 2-groups. So in this situation we can apply Proposition \ref{orbdougro} instead of Theorem \ref{orblagr} to deduce that the $\mathcal{S}$-orbits that pass through the units in $\mathcal{G} $ are symplectic groupoids, which is a stronger statement than the one provided by Corollary \ref{orbpoi2gro}.  
\begin{rema} The double Lie groupoid $\mathcal{S} $ is actually a {\em double symplectic groupoid} \cite{luwei2}, i.e. its symplectic form is multiplicative with respect to both groupoid structures on $ \mathcal{S} $. This means that also $S_0 \rightrightarrows \mathcal{G}_0$ is a Poisson groupoid. Therefore, Corollary \ref{cotpoigro} gives us another family of symplectic groupoid as subgroupoids of $S_0 \rightrightarrows \mathcal{G}_0$. It would be interesting to know whether Poisson 2-groups are integrable by double symplectic groupoids in general. \end{rema} 
\begin{exa} In the case of Example \ref{coaorb2gro}, Proposition \ref{orbdougro} implies that the whole coadjoint orbit of a unit is a subgroupoid of $\mathfrak{G}^*$ and hence it is a symplectic groupoid. In fact, as we have seen, the infinitesimal coadjoint action on $\mathfrak{G}^*$ is integrable by the double Lie groupoid 
\[  \xymatrix{ \mathfrak{G}^* \times \mathcal{G}\ar@<-.5ex>[r] \ar@<.5ex>[r]\ar@<-.5ex>[d] \ar@<.5ex>[d]& \mathfrak{G}^*\ar@<-.5ex>[d] \ar@<.5ex>[d] \\ \mathfrak{h}^* \times G \ar@<-.5ex>[r] \ar@<.5ex>[r] & \mathfrak{h}^*  }\]
given by the coadjoint action. The Poisson structures that these symplectic groupoids integrate are the $G$-orbits in $\mathfrak{h}^*$ and these can be quite varied, for instance: 
\begin{itemize} \item if $\mathfrak{G}^*$ is isomorphic to the pair groupoid on the dual of a Lie algebra $\mathfrak{G}^*\cong (\mathfrak{g}^* \times \mathfrak{g}^* \rightrightarrows \mathfrak{g}^*)$, then the symplectic groupoids that we get in this manner are the pair groupoids over the coadjoint orbits in $\mathfrak{g}^*$ endowed with their canonical symplectic structures, \item if $\mathfrak{G}^*$ is given by a differential crossed module defined by the zero map $0:\mathfrak{h} \rightarrow \mathfrak{g} $, then the coadjoint orbits of the units in $\mathfrak{G}^*$ integrate the null Poisson structures on the $G$-orbits in $\mathfrak{h}^*$. \item Take $V$ a symplectic vector space and let $\phi:\mathfrak{h} \rightarrow \mathfrak{g} $ be the differential crossed module associated to the Heisenberg Lie algebra $\mathfrak{h}=V \times \mathbb{R}$, where $\mathfrak{g}$ is the Lie algebra of derivations of $\mathfrak{h} $ and $\phi$ is defined by $u\mapsto \ad_u$. In this case we can integrate $\mathfrak{h} $ with the Heisenberg group $H$ and we can integrate $\mathfrak{g} $ with $G=\text{Aut}(\mathfrak{h} )$. So we can see that there are three kinds of $G$-orbits in $\mathfrak{h}^*$: two open orbits $\{(\alpha ,\mu)\in \mathfrak{h}^* |\mu\neq 0\}$ isomorphic to $V \times (0,\infty)$ with its canonical Lie-Poisson structure and the orbits  of the form $ \mathcal{O} \times \{0\}\subset \mathfrak{h}^*$, where $\mathcal{O}$ is an orbit of the linear symplectic group of $V$, which are endowed with the zero Poisson bracket. \end{itemize}   \end{exa}  
\subsection*{Acknowledgements} The author thanks CNPq for the financial support and D. Mart\'inez Torres for fruitful conversations. This work is part of the author's PhD thesis written under the helpful guidance of H. Bursztyn. The author also thanks the referee whose comments and suggestions significantly improved this work.

\printbibliography
\end{document}